\definecolor{verylight}{gray}{0.97}
\definecolor{light}{gray}{0.9}
\definecolor{medium}{gray}{0.85}
\definecolor{dark}{gray}{0.6}
\def\frk{\mathfrak}               
\def\Phi{{\frk N}}
\def\al{\alpha}
\def\bl{\beta}
\def\opn#1#2{\def#1{\operatorname{#2}}} 
\opn\chara{char} \opn\length{\ell} \opn\pd{pd} \opn\rk{rk}
\opn\projdim{proj\,dim} \opn\injdim{inj\,dim} \opn\rank{rank}
\opn\depth{depth} \opn\grade{grade} \opn\height{height}
\opn\Det{Det}\opn\size{size}
\opn\embdim{emb\,dim} \opn\codim{codim}
\opn\Tr{Tr} \opn\bigrank{big\,rank}
\opn\superheight{superheight}\opn\lcm{lcm}
\opn\trdeg{tr\,deg}
\opn\reg{reg} \opn\lreg{lreg} \opn\ini{in} \opn\lpd{lpd}
\opn\size{size}\opn{\mult}{mult}
\opn{\Cl}{Cl}
\opn{\PF}{PF}
\opn{\RF}{RF}
\opn{\MC}{MC}
\opn{\Kos}{Kos}
\opn\div{div} \opn\Div{Div} \opn\cl{cl} \opn\Cl{Cl}
\opn\Spec{Spec} \opn\Supp{Supp} \opn\supp{supp} \opn\Sing{Sing}
\opn\Ass{Ass} \opn\Min{Min} \opn\cl{cl}
\opn\Ann{Ann} \opn\Rad{Rad} \opn\Soc{Soc}
\opn\Syz{Syz} \opn\Im{Im} \opn\Ker{Ker} \opn\Coker{Coker}
\opn\Am{Am} \opn\Hom{Hom} \opn\Tor{Tor} \opn\Ext{Ext}
\opn\End{End} \opn\Aut{Aut} \opn\id{id} \opn\ini{in}\opn\GCD{GCD}
\opn\nat{nat}
\opn\pff{pf}
\opn\Pf{Pf} \opn\GL{GL} \opn\SL{SL} \opn\mod{mod} \opn\ord{ord}
\opn\Gin{Gin}
\opn\Hilb{Hilb}\opn\adeg{adeg}\opn\std{std}\opn\ip{infpt}
\opn\Pol{Pol}
\opn\sat{sat}
\opn\Var{Var}
\opn\Gen{Gen}
\opn\lex{lex}
\opn\div{div}
\opn\NUF{NUF}
\opn\mNUF{mNUF}
\opn\type{type}
\opn\PF{PF}
\opn\Fr{F}
\opn\Ap{Ap}
\opn\aff{aff} \opn\con{conv} \opn\relint{relint} \opn\st{st}
\opn\lk{lk} \opn\cn{cn} \opn\core{core} \opn\vol{vol}
\opn\link{link} \opn\star{star}
\opn\gr{gr}
\def\pot#1#2{#1[\kern-0.28ex[#2]\kern-0.28ex]}
\opn\dirlim{\underrightarrow{\lim}}
\opn\inivlim{\underleftarrow{\lim}}
\let\to=\rightarrow
\def\Implies{\ifmmode\Longrightarrow \else
        \unskip${}\Longrightarrow{}$\ignorespaces\fi}
\def\implies{\ifmmode\Rightarrow \else
        \unskip${}\Rightarrow{}$\ignorespaces\fi}
\def\iff{\ifmmode\Longleftrightarrow \else
        \unskip${}\Longleftrightarrow{}$\ignorespaces\fi}
\newtheorem{Theorem}{Theorem}[section]
\newtheorem{thm}{Theorem}[section]
\newtheorem{lem}[Theorem]{Lemma}
\newtheorem{cor}[Theorem]{Corollary}
\newtheorem{rem}[Theorem]{Remark}
\newtheorem{Claim}[Theorem]{Claim}
\newtheorem*{acknowledgement}{Acknowledgement}
\let\epsilon\varepsilon
\let\kappa=\varkappa
\def\qed{\ifhmode\textqed\fi
      \ifmmode\ifinner\quad\qedsymbol\else\dispqed\fi\fi}
\def\textqed{\unskip\nobreak\penalty50
       \hskip2em\hbox{}\nobreak\hfil\qedsymbol
       \parfillskip=0pt \finalhyphendemerits=0}
\def\dispqed{\rlap{\qquad\qedsymbol}}
\opn\dis{dis}
\def\pnt{{\raise0.5mm\hbox{\large\bf.}}}
\opn\Lex{Lex}
\opn\int{int}
\newcommand{\inD}[1][\relax]{\def\argone{#1}\def\temprelax{\relax}
  \ifx\argone\temprelax\right.\else\,\middle|#1\right.{}\fi}
\newif\ifbinary
\newcommand{\Z}{\mathbb Z}
\begin{document}

\title{A short proof of Bresinski's Theorem on Gorenstein semigroup rings generated by 4 elements.}

\author{Kei-ichi Watanabe}
\thanks{(Kei-ichi Watanabe) Department of Mathematics, College of
Humanities and Sciences,
Nihon University, Setagaya-ku, Tokyo, 156-8550, Japan}
\thanks{This work was partially supported by JSPS  KAKENHI 
Grant Number  26400053}

\begin{abstract}
Let $H=\langle n_1,\ldots ,n_4\rangle$ be a  numerical semigroup generated by $4$ elements, 
which is symmetric and let $k[H]$ be the semigroup ring of $H$ over a field $k$.  
H. Bresinski proved in \cite{Br} that the defining ideal of $k[H]$ is minimally generated by 
$3$ or $5$ elements. We give a new short proof of Bresinski's Theorem using the structure 
theorem of Buchsbaum and Eisenbud on the  minimal free resolution of 
Gorenstein rings of embedding codimension $3$.
\end{abstract}

\maketitle

\section{Basic concepts}

Let $H=\langle n_1,\ldots ,n_4\rangle$ be a  numerical semigroup generated by $4$ elements.
We denote 
\[\Fr(H) = \max \{ n\in \Z\;|\; n\not\in H\}\]
 the Frobenius number of $H$ and $N = \sum_{i=1}^4 n_i$.   
We call $H$ symmetric if for every $n\in \Z$, $n\in H$ if and only if $\Fr(H) - n\not\in H$.
Let $k[H]$ be the semigroup ring of $H$ over a field $k$ and $S=k[x_1,\ldots,x_4]$ be the 
polynomial ring over $k$ in the indeterminates $x_1,\ldots,x_4$. 
It is known by \cite{Ku} that $H$ is symmetric if and only  $k[H]$ is Gorenstein.
Let $\pi: S\to k[H]$ be the surjective $k$-algebra homomorphism with $\pi(x_i)=t^{n_i}$ for $i=1,\ldots,n$.  
We consider $S$ as a graded ring putting $\deg(x_i) = n_i$ so that $\pi$ preserves the degree. 
We denote by $I_H$ the kernel of $\pi$. If we assign to each $x_i$ the degree $n_i$, then with respect to this grading, $I_H$ is a homogeneous ideal, generated by binomials. A binomial $\phi=\prod_{i=1}^ex_i^{\al_i}-\prod_{i=1}^ex_i^{\bl_i}$ belongs to $I_H$ if and only if $\sum_{i=1}^e\al_in_i=\sum_{i=1}^e\bl_in_i$. 

\bigskip
We define $\al_i$ to be the minimal positive integer such that
\begin{eqnarray}
\label{minimal}
\al_i n_i = \sum_{j=1, j\ne i}^4 \al_{ij} n_j.
\end{eqnarray}

Thus $f_i = x_i^{\al_i} - \prod_{j=1, j\ne i}^4 x_j^{\al_{ij}}$ ($i=1,2,3,4$) is a minimal generator 
of $I_H$. 

\bigskip
The purpose of this note is to give a short proof of Bresinski's Theorem;

\begin{thm}\label{Br}
Assume that $H$ is symmetric generated by $4$ elements. If $k[H]$ is not a complete 
intersection, then $I_H$ is minimally generated by $5$ elements.
\end{thm}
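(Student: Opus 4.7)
The plan is to apply the Buchsbaum--Eisenbud structure theorem for codimension-$3$ Gorenstein ideals. Since $H$ is symmetric, $k[H]$ is Gorenstein by \cite{Ku}, and as $\dim k[H]=1$ with $\embdim k[H]=4$ the defining ideal $I_H$ has grade $3$. Thus $\mu(I_H)=2m+1$ for some $m\geq 1$, and $I_H$ is minimally generated by the $2m$-order Pfaffians of some $(2m+1)\times(2m+1)$ alternating matrix $\Phi$ with entries in $\mm=(x_1,\ldots,x_4)$.

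For the lower bound $\mu(I_H)\geq 5$, the key step is to verify that for each $i$, any minimal generating system of $I_H$ must contain a binomial of the form $x_i^{\al_i}-(\text{monomial in the other }x_j)$. Indeed, reducing modulo $(x_j:j\neq i)$ sends every binomial of $I_H$ either to $0$ or to $\pm x_i^a$ with $a\geq\al_i$, by the minimality of $\al_i$ in \eqref{minimal}; hence the image of $I_H$ in $k[x_i]$ is the principal ideal $(x_i^{\al_i})$, which requires a minimal generator with pure power $x_i^{\al_i}$. This yields four distinct minimal generators, so $\mu(I_H)\geq 4$. Combined with parity and the non--complete-intersection hypothesis, which excludes $\mu(I_H)=3$, we conclude $\mu(I_H)\geq 5$.

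The harder direction $\mu(I_H)\leq 5$ carries the main weight of the proof. Suppose $\mu(I_H)=2m+1\geq 7$ for contradiction, and order the Pfaffians so that $g_i=f_i$ for $i=1,2,3,4$. The plan is to exploit two rigidifying facts about $\Phi$: first, the self-duality of the Buchsbaum--Eisenbud resolution pairs the $H$-degree shifts in $F_1$ and $F_2$ so that they sum to the canonical shift $s=\Fr(H)+\sum_j n_j$, which in turn pins down the $H$-degree of each entry $\Phi_{jk}$; second, for every $i\leq 4$ the pure power $x_i^{\al_i}$ inside $g_i=\Pf_i(\Phi)$ must arise from a single perfect matching on $\{1,\ldots,2m+1\}\setminus\{i\}$ whose entries multiply to that pure power, with all remaining terms in the Pfaffian expansion canceling. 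Tracking these four distinct pure-power requirements simultaneously should force many entries of $\Phi$ to vanish, trimming $\Phi$ to size $\leq 5$ and contradicting $m\geq 3$.

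The main obstacle is precisely this final rigidity step: converting the four pure-power conditions into a hard bound on the matrix size. The crux is a careful combinatorial and degree analysis of how perfect matchings in the Pfaffian expansion can produce a binomial with a prescribed pure power, and why simultaneously accommodating four such binomials with pure powers in the four distinct variables $x_1,x_2,x_3,x_4$ is impossible once $\Phi$ has seven or more rows.
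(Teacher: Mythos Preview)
Your setup matches the paper's: Buchsbaum--Eisenbud, self-duality giving $\deg e_s+\deg f_s=\Fr(H)+N$, and the four distinguished generators $f_1,\ldots,f_4$. The lower bound is fine. The gap is that you have not supplied the mechanism that actually forces entries of the alternating matrix (call it $M=(m_{ij})$) to vanish; your proposed route through perfect matchings producing pure powers is not how the first and main reduction works.

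The missing lemma is: $m_{s,t}=0$ whenever $s,t\ge 5$, $s\ne t$. This comes straight from the degree relation, not from matching combinatorics. If $m_{s,t}\ne 0$ with degree $h\in H_+$, then $\Fr(H)+N=h+\deg f_s+\deg f_t$. But for $s,t\ge 5$ each of $f_s,f_t$ has the form $x_i^ax_j^b-x_k^cx_l^d$ with all exponents positive, so by choosing the right monomial from each one can write $h+\deg f_s+\deg f_t$ as $\sum a_in_i$ with every $a_i>0$, whence $\Fr(H)\in H$, a contradiction. Once this vanishing is known, $r\ge 9$ dies instantly: $M(1)$ then carries a zero block of size $(r-4)\times(r-4)\ge 5\times 5$ inside an $(r-1)\times(r-1)$ matrix, so $\det M(1)=0\ne f_1^2$. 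The remaining case $r=7$ is eliminated by an argument closer to your sketch: the $3\times3$ block of $M$ in rows $2$--$4$ and columns $5$--$7$ must have determinant $\pm f_1$, which (run for each variable) forces all twelve entries in rows $1$--$4$, columns $5$--$7$ to be nonzero pure powers of some $x_j$; one more application of the same Frobenius-degree trick to a suitable pair $(f_1,f_t)$ with $t\ge 5$ then forces $m_{1,t}=0$, a contradiction.
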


For the proof we let 
\[F_{\bullet} =[\; 0\to F_3 \to F_2 \overset{d_2}{\to} F_1 \overset{d_1}{\to} F_0 = k[H]\to 0\; ]\] 
be the graded minimal free resolution of $k[H]$ over $S$.  
Note that \lq\lq $H$ is symmetric" is equivalent to say \lq\lq $k[H]$ is a Gorenstein ring".
We denote $r = \mu(I_H) = \rank F_1$, $\phi_1, \ldots \phi_r$ be free basis of $F_1$ 
and we  put $f_i = d_1(\phi_i)\in I_H$. We always assume that each $f_i$ is a binomial.

Let us summarize known results about $F_{\bullet}$.

\begin{thm}\label{BE} (\cite{WJ}, \cite{BE}) (1) Since $k[H]$ is Gorenstein with $a$-invariant $a(k[H])= \Fr(H)$, 
$F_3 \cong S(-\Fr(H) - N)$ and $F_{\bullet}$ is self-dual in the sense there is an 
isomorphism $\Hom_S( F_{\bullet} , F_3) \cong F_{\bullet}$. \par
(2)   $r$ is an odd number.\par  
(3)  Let $M=(m_{ij})$ be the $r$ by $r$ matrix corresponding $d_2: F_2\to F_1$. Then 
we can choose the bases of $F_2$ and $F_1$ so that $M$ is a skew-symmetric matrix. \par
(4) Let  $\{ e_1,\ldots , e_r\}$ be the free basis of $F_2$ so that  $d_2(e_i) = 
\sum_{j=1}^r m_{ij} \phi_j$. Then  if $M(i)$ denotes the $(r-1)\times (r-1)$ matrix obtained by deleting $i$-th 
row and $i$-th column of $M$, then $f_i$ is obtained as the Pfaffian of $M(i)$ and $\deg(e_i) = 
\Fr(H) + N -\deg(f_i)$.  Namely, $\Det(M(i)) = f_i^2$. 

Note that if the $i$-th row of $M$ is $(m_{i1}, \ldots , m_{ir})$, then we have
\[(*) \quad \sum_{i=1}^r m_{ij}f_j =0.\]
\end{thm}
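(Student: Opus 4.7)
The plan is to derive $r := \mu(I_H) = 5$ directly from Theorem~\ref{BE} combined with the binomial/toric structure of $I_H$.

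\textbf{Lower bound.} Each of the four basic generators $f_1,\dots,f_4$ coming from (\ref{minimal}) is a minimal generator of $I_H$ (as asserted in the setup), so $r \geq 4$. Theorem~\ref{BE}(2) forces $r$ odd, so $r \geq 5$. (Equivalently, the non-CI hypothesis excludes $r = 3$.)

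\textbf{Upper bound ($r \leq 5$).} Suppose for contradiction that $r \geq 7$. By Theorem~\ref{BE}(3)--(4), $I_H$ is generated by the $(r-1) \times (r-1)$ pfaffians of a skew-symmetric matrix $M = (m_{ij})$, with $H$-homogeneous entries satisfying $\deg(m_{ij}) = D - \deg(f_i) - \deg(f_j)$, where $D := \Fr(H) + N$, and $m_{ii} = 0$. Self-duality (Theorem~\ref{BE}(1)) provides a perfect pairing on $\{f_1,\dots,f_r\}$ with $\deg(f_i) + \deg(f_{i^*}) = D$, so roughly half the generators have degree $\leq D/2$ and half have degree $\geq D/2$. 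Because $I_H$ is a toric ideal whose minimal free resolution is multi-$H$-graded, I would use a standard multi-graded rigidity argument to reduce to the case where each nonzero entry $m_{ij}$ is a single monomial of $S$.

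\textbf{The crux.} With this reduction, each pfaffian $f_i = \mathrm{Pf}(M(i))$ is a signed sum of $(r-2)!! \geq 15$ products of monomial entries, yet $f_i$ must be a binomial; so all but two of these monomial products have to cancel pairwise. I would apply this first to the four rows corresponding to the basic generators $f_1,\dots,f_4$, each of which has the very restrictive form $x_i^{\alpha_i} - \prod_{j \ne i} x_j^{\alpha_{ij}}$ with a pure power $x_i^{\alpha_i}$ on one side. Matching this form against the pfaffian expansion of $M(i)$ pins down which monomial-product of the expansion realizes $x_i^{\alpha_i}$ and hence which entries of row/column $i$ must be nonzero; propagating this information through the syzygy relations $(*)$ across $i = 1,2,3,4$ should force the rows and columns indexed $\geq 5$ to be redundant (either zero, or reproducing an existing $f_j$ up to sign). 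This contradicts the minimality of $f_5,\dots,f_r$ and yields $r \leq 5$.

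The main obstacle will be carrying out the combinatorial case analysis cleanly: the Gorenstein degree pairing from Theorem~\ref{BE}(1) and the pure-power structure of the $f_i$'s are the two structural inputs that I expect will reduce what looks like a forbidding case split to a manageable one, yielding the short proof promised in the title.
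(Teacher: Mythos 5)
Your proposal does not prove the statement it was assigned. The statement is Theorem \ref{BE}, the structure theorem of J.~Watanabe and Buchsbaum--Eisenbud for codimension-$3$ Gorenstein quotients: that $F_\bullet$ is self-dual, that $r$ is odd, that $d_2$ can be represented by a skew-symmetric matrix $M$, and that the $f_i$ are the submaximal Pfaffians of $M$. The paper offers no proof of this --- it is quoted from \cite{WJ} and \cite{BE} and used as a black box. What you have written instead is a proof sketch of Theorem \ref{Br} (that $\mu(I_H)=5$), \emph{taking Theorem \ref{BE} as an input}. Nothing in your text establishes self-duality, the parity of $r$, skew-symmetrizability, or the Pfaffian description; so as a proof of the assigned statement it is vacuous. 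A genuine proof would have to go through the Buchsbaum--Eisenbud multiplicative structure on finite free resolutions (or J.~Watanabe's linkage argument), which is an entirely different body of technique.

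Even read charitably as an attempt at Theorem \ref{Br}, the sketch has a real gap: it never invokes the one arithmetic fact that powers the paper's argument, namely $\Fr(H)\notin H$. The paper's key step (Lemma \ref{s,t ge 5}) shows $m_{s,t}=0$ for $s,t\ge 5$ by observing that a nonzero entry would force $\Fr(H)+N=h+\deg f_s+\deg f_t$ with the right-hand side $\ge_H N$, hence $\Fr(H)\in H$, a contradiction; this is what kills $r\ge 9$ (a zero block in $M(1)$ annihilates its determinant) and, combined with Claim \ref{(*)}, excludes $r=7$. Your ``multi-graded rigidity'' reduction and the pairwise-cancellation analysis of the Pfaffian expansion are left entirely unexecuted (``should force'', ``I expect''), and your assertion of a perfect pairing $\deg(f_i)+\deg(f_{i^*})=\Fr(H)+N$ \emph{among the generators $f_i$} misreads self-duality: part (4) pairs $\deg(f_i)$ with $\deg(e_i)$, the degrees of the basis of $F_2$, not with the degree of another minimal generator. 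Without the Frobenius-number obstruction, degree bookkeeping alone cannot rule out $r=7$.
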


\section{The proof.}

Now we will give a proof of Theorem \ref{Br} using Theorem \ref{BE}. 

Renumbering $\{ f_1,\ldots , f_r\}$, we  can assume
$f_p = x_p^{\al_p} - q_p$, where $q_p$ is a monomial of $\{x_1,\ldots , x_4\}\setminus \{x_p\}$ 
($p= 1,\ldots , 4$).  

Hence, for $p\ge 5$, $f_p$ is of the form 

\[(**) \quad f_p= x_i^ax_j^b - x_k^cx_l^d  \quad (a,b,c,d >0)\quad (p\ge 5)\]

for some permutation $\{x_i,x_j,   x_k, x_l\}$ of $\{x_1,x_2,   x_3, x_4\}$.

\begin{rem}  In the setting above, we assumed $f_p= - f_q$ ($1\le p< q \le 4$) does not occur.
But the argument below shows that if $I_H$ has a minimal generator of the form 
 $x_p^{\al_p} - x_q^{\al_q}=0$, then we get a  contradiction easier than the following argument
 as is explained in Remark \ref{fp=fq} .
\end{rem}

Now we will show that $r =5$. So, we assume  $r\ge 7$ and get a contradiction.

\begin{lem}\label{s,t ge 5} If $s,t\ge 5$ and $s\ne t$, then $m_{s,t}=0$. 
\end{lem}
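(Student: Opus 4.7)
The plan is to argue by contradiction: assume $m_{s,t} \neq 0$ for some $s, t \ge 5$ with $s \ne t$, and extract a contradiction from the syzygy relation $(*)$.

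I would begin with the relation coming from row $t$ of $M$. Using $m_{tt}=0$, this reads
\[
m_{ts}\,f_s \;=\; -\sum_{j\ne s,t} m_{tj}\, f_j.
\]
Write $f_s = X_s - Y_s$ with $X_s = x_{i_s}^{a_s} x_{j_s}^{b_s}$ and $Y_s = x_{k_s}^{c_s} x_{l_s}^{d_s}$, where $\{i_s,j_s,k_s,l_s\}=\{1,2,3,4\}$. By minimality of $f_s$ as a generator of $I_H$, each exponent is strictly less than the corresponding $\alpha$ (so $a_s < \alpha_{i_s}$, $b_s < \alpha_{j_s}$, $c_s < \alpha_{k_s}$, $d_s < \alpha_{l_s}$); otherwise either $X_s$ or $Y_s$ could be reduced using some $f_p$ ($p \le 4$), yielding a strictly smaller binomial in $I_H$ and contradicting minimality. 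Similarly write each $f_j = X_j - Y_j$; in particular, for $j \le 4$ the distinguished monomial $X_j = x_j^{\alpha_j}$ is a pure power.

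Next, I would expand both sides as polynomials in $S$ and compare monomials, using the $\Z^4$-multigrading. Every monomial in $m_{ts} X_s$ is divisible by $x_{i_s}^{a_s}x_{j_s}^{b_s}$ (hence has $x_{i_s}$-exponent $\ge a_s$ and $x_{j_s}$-exponent $\ge b_s$), while every monomial in $m_{ts} Y_s$ is divisible by $x_{k_s}^{c_s}x_{l_s}^{d_s}$. Since $X_s$ and $Y_s$ have \emph{disjoint} supports that partition $\{x_1,\dots,x_4\}$, these contributions cluster by support. Each of them must be cancelled by matching contributions from $\sum_{j\ne s,t} m_{tj} f_j$.

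The main obstacle is the case analysis needed to rule out every possible cancellation. For $j \le 4$, matching a monomial of $m_{ts} X_s$ against $m_{tj} X_j = m_{tj}\,x_j^{\alpha_j}$ requires $x_j^{\alpha_j}$ to divide the monomial; since $a_s < \alpha_{i_s}$ and $b_s < \alpha_{j_s}$, this forces high powers of $x_j$ to come out of $m_{ts}$ itself, yielding rigid divisibility constraints on $m_{ts}$. For $j \ge 5$, $j \ne s, t$, the binomials $f_j$ carry their own partition-supports (one of the three possible ``types'' of $2$-partitions of $\{1,2,3,4\}$), so matching restricts them rigidly. Carrying out this analysis systematically---combining it with the minimality of the resolution (so every $m_{tj}$ lies in the maximal ideal of $S$)---should leave no room for a nonzero $m_{ts}$, yielding the desired contradiction. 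The technical heart of the proof is precisely this bookkeeping of supports and exponents.
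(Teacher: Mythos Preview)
Your plan has a genuine gap: you never use the one piece of information that actually pins down row $t$ of $M$, namely its $H$-degree. The relation $\sum_j m_{tj}f_j=0$ with $m_{tt}=0$ only says that $(m_{t1},\dots,m_{tr})$ is \emph{some} syzygy of $(f_1,\dots,f_r)$ with vanishing $t$-th entry. There are many such syzygies, and some of them certainly have a nonzero $s$-th entry; monomial bookkeeping and ``minimality'' (i.e.\ $m_{tj}\in(x_1,\dots,x_4)$) cannot by themselves rule this out. What singles out the row of $M$ is the Gorenstein duality in Theorem~\ref{BE}(4): $\deg e_t=\Fr(H)+N-\deg f_t$. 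Combined with $\deg e_t=\deg m_{ts}+\deg f_s$ (homogeneity of $d_2$), this forces
\[
\Fr(H)+N=\deg m_{ts}+\deg f_s+\deg f_t,
\]
and this degree identity is exactly what your argument is missing. (Note also that the $m_{tj}$ need not be monomials, so the ``$\Z^4$-multigrading'' comparison is more delicate than you suggest.)

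The paper's proof is then a one-line degree argument, not a case analysis. Since $s,t\ge 5$, both $f_s$ and $f_t$ are of type $(**)$, so $\deg f_t$ admits two representations $a\,n_i+b\,n_j$ and $c\,n_k+d\,n_l$ with $\{i,j,k,l\}=\{1,2,3,4\}$; hence for any index $p$ one of these representations has positive $n_p$-coefficient. Choosing representations appropriately, $\deg m_{ts}+\deg f_s+\deg f_t$ can be written as $\sum_i a_in_i$ with all $a_i>0$, i.e.\ it lies in $N+H$. The displayed identity then gives $\Fr(H)\in H$, a contradiction. This uses the symmetry of $H$ in an essential way; your syzygy-matching plan does not, which is another sign that it cannot close on its own.
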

\begin{proof} Assume $m_{s,t}\ne 0$ with $\deg(m_{s,t}) =h\in H_+$.
Then we will have 
\[\deg e_s = \Fr(H) + N - \deg(f_s) = h + \deg(f_t)\]
or  
\[ \Fr(H) + N = h + \deg f_s + \deg f_t.\]

Since $s,t\ge 5$, $f_s, f_t$ are of the form (**) and we can take the expression 
\[h + \deg f_s = \sum_{i=1}^4 a_i n_i\]
so that $3$ $a_i$'s among $4$ are positive.  If some $a_j=0$, then we can 
choose expression of $\deg f_t$ so that the coefficient of $n_j$ is positive.  

That means $h + \deg f_s + \deg f_t \ge_H N$, 
where we denote $a\ge_H b$ if $a-b\in H$. Then we get $\Fr(H)\in H$, a 
contradiction !
\end{proof}

\begin{cor}  $r\le 7$.
\end{cor}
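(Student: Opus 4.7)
The plan is to argue by contradiction. Since $r$ is odd by Theorem \ref{BE}(2), the next odd integer beyond $7$ is $9$, so I would assume $r\geq 9$ and derive a contradiction with the identity $\Det(M(1))=f_1^2\neq 0$ from Theorem \ref{BE}(4).

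First, I would use Lemma \ref{s,t ge 5} to locate a large zero block inside $M$. That lemma gives $m_{s,t}=0$ for all distinct $s,t\ge 5$, and since $M$ is skew-symmetric we also have $m_{s,s}=0$. Hence the principal submatrix of $M$ on rows and columns indexed by $\{5,6,\ldots,r\}$ is the zero matrix. After deleting row $1$ and column $1$, this same $(r-4)\times(r-4)$ zero block persists inside $M(1)$ on the rows and columns indexed by $\{5,\ldots,r\}$.

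Next I would extract a rank bound. Viewing $M(1)$ as a matrix over the fraction field of $S$, each of its $r-4$ rows indexed by $\{5,\ldots,r\}$ has nonzero entries confined to the $3$ columns indexed by $\{2,3,4\}$. Therefore these rows together span a subspace of dimension at most $3$, and the remaining $3$ rows (those indexed by $\{2,3,4\}$) contribute at most $3$ more. Consequently $\rank M(1)\leq 6$.

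Finally, if $r\geq 9$ then $M(1)$ has size $(r-1)\times(r-1)\geq 8\times 8$, so the rank bound forces $\Det(M(1))=0$, contradicting $\Det(M(1))=f_1^2\neq 0$. Therefore $r\leq 7$. I do not anticipate a serious technical obstacle; the heart of the argument is merely the observation that Lemma \ref{s,t ge 5} carves out a zero block inside $M$ that is large enough to force $M(1)$ to be rank-deficient as soon as $r$ exceeds $7$.
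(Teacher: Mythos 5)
Your proof is correct and takes essentially the same route as the paper: assume $r\ge 9$, use Lemma \ref{s,t ge 5} to produce the zero block on the rows and columns indexed by $\{5,\dots,r\}$, and conclude $\Det(M(1))=0$, contradicting $\Det(M(1))=f_1^2\ne 0$. Your rank count simply makes explicit the standard zero-block observation that the paper's one-line proof leaves implicit.
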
 
\begin{proof} Assume $r\ge 9$.  We know that $f_1$ is the Pfaffian of the matrix 
$M(1)$.   Then by Lemma \ref{s,t ge 5}, we can see $\Det(M(1))=0$ because 
$m_{s,t}=0$ if $s,t\ge 5$,
\end{proof}

\begin{rem}\label{fp=fq}  Assume $I_H$ has a minimal generator of the form $x_p^{\al_p} - x_q^{\al_q}=0$.
Then we can assume $f_p$ for $p\ge 4$ is of the form (**).  Then above argument shows that 
$m_{s,t} = 0$ for $s,t\ge 4$.  Now, if $r=7$, then $\Det(M(1))=0$ since it contains a $4\times 4$ 
$0$ matrix in it. Thus to show $r\le 5$ we can assume  there is no minimal generator of $I_H$ of 
type $x_p^{\al_p} - x_q^{\al_q}$.
\end{rem}

Let us  continue the proof of the Theorem.  
We  assume $r=7$ and deduce a contradiction.

We must have $f_1 = x_1^{\al_1} - p_1$ as the Pfaffian of $M(1)$.
Now we know by Lemma \ref{s,t ge 5} that if $s,t \ge 5$, $m_{s,t}=0$.  
Let $N(1)$ be $3 \times 3$ matrix which is $2-4$ rows and $5-7$ columns of $M$.
Then  we must have $\Det(N(1)) = \pm x_1^{\al_1} - p_1$. 
That means, for every $s$, $2\le s \le 4$, there should exist $t$ with $5\le t\le 7$ such that 
$m_{s,t}$ is a power of $x_1$.   Namely, there should be at least $3$ components 
that are a power of $x_1$. 

Since the same should be true for 
$x_2,\ldots , x_4$, there should be $3\times 4 =12$ components in $1-4$ rows 
and $5-7$ columns.  Namely we get

\begin{Claim}\label{(*)} Every $(s,t)$ component of $M$ with 
$1\le s\le 4$ and $5 \le t\le 7$ is a power of some $x_i$ and consequently $\ne 0$. 
\end{Claim}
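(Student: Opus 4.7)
The plan is to run the Pfaffian / block-decomposition argument of the paragraph just above the claim once for each $p\in\{1,2,3,4\}$ and to close by a counting argument against the $12$ entries of the $4\times3$ block.

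Fix $p\in\{1,2,3,4\}$. By Lemma \ref{s,t ge 5}, after reordering the six surviving indices of $M(p)$ so that those in $\{1,\ldots,4\}\setminus\{p\}$ precede those in $\{5,6,7\}$, we have
$$M(p)=\begin{pmatrix}A_p & N(p)\\ -N(p)^T & 0\end{pmatrix},$$
with $A_p$ a $3\times 3$ skew-symmetric block and $N(p)$ the $3\times 3$ block of $M$ indexed by rows $\{1,\ldots,4\}\setminus\{p\}$ and columns $\{5,6,7\}$. A block-column swap gives $\det(M(p))=\det(N(p))^2$, and combining with $\det(M(p))=f_p^2$ from Theorem \ref{BE}(4) forces $\det(N(p))=\pm f_p=\pm(x_p^{\al_p}-q_p)$, where $q_p$ is a monomial in $\{x_j:j\ne p\}$.

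Next I would specialize by sending $x_j\mapsto 0$ for every $j\ne p$; this kills $q_p$ and reduces the determinant of $N(p)$ to $\pm x_p^{\al_p}\ne 0$. Using that each entry of the Buchsbaum--Eisenbud matrix $M$ is a monomial in this toric Gorenstein codimension-3 setting, the specialization of every $m_{s,t}$ is either $0$ or a nonzero pure power of $x_p$, and the nonvanishing of the specialized determinant forces (by the permutation expansion) a bijection $\tau\colon\{1,\ldots,4\}\setminus\{p\}\to\{5,6,7\}$ with each $m_{s,\tau(s)}$ a nonzero pure power of $x_p$. Thus for each $p$ we obtain three entries of the $4\times 3$ block that are pure powers of $x_p$. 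Summing over $p\in\{1,2,3,4\}$ yields $3\times 4=12$ such markings. A nonzero monomial lying in the maximal ideal $(x_1,\ldots,x_4)$ is a pure power of at most one $x_i$, and every nonzero entry of the minimal matrix $M$ lies in this maximal ideal; so the $12$ markings fall on $12$ distinct entries of the block. Since the block has exactly $12$ entries, every one of them is a pure power of some $x_i$ and hence nonzero, which is the claim.

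The delicate point is the monomiality of the entries $m_{s,t}$, which is what upgrades ``has a pure-$x_p$ summand'' to ``is a pure power of $x_p$''; without this, an $H$-homogeneous entry could simultaneously carry pure summands in two distinct variables and absorb two of the $12$ markings, ruining the count. In the toric Gorenstein codimension-3 setting this monomiality is standard (the minimal syzygies of binomial generators admit monomial coefficients), and once it is taken for granted the argument reduces to the clean combinatorial count above.
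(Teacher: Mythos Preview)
Your proof is correct and follows essentially the same route as the paper: both identify $\det N(p)=\pm f_p$ from the block structure of $M(p)$, extract for each $p$ three entries of the $4\times 3$ block that are pure powers of $x_p$ (one per row of $N(p)$), and finish with the $3\times 4=12$ count against the $12$ entries of the block. Your specialization $x_j\mapsto 0$ for $j\ne p$ is just a tidy way to isolate the $x_p^{\alpha_p}$ monomial, and the monomiality of the entries that you flag as delicate is used by the paper as well, only without comment.
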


On the other hand, assume, say, $f_1 = x_1^{\al_1}  - x_2^bx_3^c$ with $b,c > 0$ 
and also $f_t = x_1^ax_4^d - x_2^{b'}x_3^{c'}$ for some $a,b',c',d>0$ and $5\le t \le 7$.
Then $m_{1,t}$ should be $0$, since otherwise
\[ \Fr(H) + N = h +  \deg f_1+ \deg f_t = h + an_1+ b' n_2+ c'n_3 + dn_4 \ge _H N, \]    
 which will lead to $\Fr(H)\in H$.  A contradiction!  Hence Claim \ref{(*)} will 
 lead to a contradiction.
 
 Hence we get a contradiction from $r=7$ and hence $\mu(I_H) = 5$ if $H$ is not a 
 complete intersection.
 
\begin{rem}  If $r=5$, we can show that $I_H$ has no minimal generator of the form 
$x_p^{\al_p} - x_q^{\al_q}=0$.  
\end{rem}
\begin{proof}
Assume that $f_1= x_1^a-x_2^b, f_2= x_3^c- q_3,
f_3= x_4^d- q_4$ for some monomials $q_3,q_4$ and $f_4,f_5$ are of the form (**).
Then above argument shows $m_{4,5}=m_{5,4}=0$ and $\Det(M(1)) = (x_1^a-x_2^b)^2$.
That means, $m_{i,j}$ are some power of $x_1$ or $x_2$ for $(i,j) = (2,4), (2,5), (3,4), (3,5)$.
Then it is easy to see it is impossible to get a power of $x_3$ in $\Det(M(2))$, which 
contradicts $\Det(M(2))  = (x_3^c- q_3)^2$. 
\end{proof}

\begin{cor}  If $I_H$ has an element of type $x_p^{\al_p} - x_q^{\al_q}=0$ as a minimal generator,
then $H$ is a complete intersection. 
\end{cor}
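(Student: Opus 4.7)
The plan is to prove this as an immediate contrapositive of the combination of Theorem \ref{Br} with the remark stated just above the corollary. Assuming that $I_H$ admits a minimal generator of type $x_p^{\al_p} - x_q^{\al_q}$, I will argue that $k[H]$ cannot fail to be a complete intersection, since failure to be a complete intersection would contradict the remark.

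The key steps, in order, are as follows. First, suppose for contradiction that $H$ is not a complete intersection yet $I_H$ has a minimal generator $x_p^{\al_p} - x_q^{\al_q}$. Second, apply Theorem \ref{Br}: the hypothesis that $H$ is symmetric, $4$-generated, and not a complete intersection forces $\mu(I_H) = 5$, i.e.\ $r = 5$ in the notation of the skew-symmetric resolution. Third, invoke the preceding remark, which asserts precisely that when $r = 5$ no minimal generator of $I_H$ can have the form $x_p^{\al_p} - x_q^{\al_q}$. This contradicts the existence of the assumed generator and completes the argument.

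The main obstacle is essentially already handled by the proof of that preceding remark, not by the corollary itself. Once one normalizes $f_1 = x_1^a - x_2^b$, $f_2 = x_3^c - q_3$, $f_3 = x_4^d - q_4$, Lemma \ref{s,t ge 5} forces $m_{4,5} = 0$, and the requirement $\Det(M(1)) = f_1^2$ constrains the entries $m_{i,j}$ with $(i,j) \in \{(2,4),(2,5),(3,4),(3,5)\}$ to be powers of $x_1$ or $x_2$. The nontrivial point is that no such assignment can allow a power of $x_3$ to appear in $\Det(M(2))$, contradicting $\Det(M(2)) = (x_3^c - q_3)^2$. After that monomial bookkeeping is in place, the corollary itself reduces to a purely logical reformulation of Theorem \ref{Br} and the remark, and no additional calculation is required.
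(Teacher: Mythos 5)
Your proposal is correct and matches the paper's intent exactly: the corollary is the contrapositive of Theorem \ref{Br} combined with the immediately preceding remark, and your summary of that remark's proof (normalizing $f_1=x_1^a-x_2^b$, forcing $m_{4,5}=0$, and deriving the contradiction from $\Det(M(2))=(x_3^c-q_3)^2$) is faithful to the paper's argument. No gap.
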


\begin{rem}  If $r=5$, we can deduce the form of $M$ by our argument.  
For a monomial $m$ of $\{x_1, \ldots , x_4\}$, let us put 
 \[\supp(m) = \{ x_i \;| x_i \;{\mbox{\rm divides}}\;  m\}.\]
Then, if we put $f_i = x_i^{\al_i} - q_i$
($i=1,\ldots , 4$) and $f_5 = q_5 - q_6$, then we can show that $\supp(q_i)$ ($i=1,\ldots , 6$)
are all different and $\supp(q_i)$ consists of $2$ variables.   
Also, if $f_i = x_i^{\al_i} -q_i, f_j = x_j^{\al_j} -q_j$ with $\supp(q_i) \cup \supp(q_j) =\{ x_1, \ldots , x_4\}$,
then $m_{i,j} = 0 = m_{j,i}$ and these are the only $0$ of $M$ except diagonals. 
Thus we have exactly $16 = 5^2 - 5 -4$ non-$0$ entries of $M$ and they are powers of 
some $x_i$.    Thus we could deduce  the matrix 
\[\left(\begin{array}{ccccc} 0 & -x_3^{\al_{43}} & 0 & -x_2^{\al_{32}} & - x_4^{\al_{24}}\\
x_3^{\al_{43}} & 0 & x_4^{\al_{14}} & 0 & -x_1^{\al_{31}}\\
0 &  -x_4^{\al_{14}} & 0 & -x_1^{\al_{21}} & - x_2^{\al_{42}}\\
x_2^{\al_{32}} & 0 & x_1^{\al_{21}} & 0 & -x_3^{\al_{13}} \\
x_4^{\al_{24}} & x_1^{\al_{31}} & x_2^{\al_{42}} & x_3^{\al_{13}} & 0
\end{array}\right)
   \]
 in Theorem 4 of \cite{BFS}.
\end{rem}
\begin{proof}  Since  $\Det(M(1))= (x_1^{\al_1}- q_1)^2$, 
 there should be at least a power of $x_1$ in $2,3,4,5$ rows. 
Since  $\Det(M(i))= (x_i^{\al_i}- q_i)^2$ for $i=2,3,4$, 
 $m_{i5}$ is a power of some $x_k$, $k\ne i$ and 
a power of every $x_k$ ($1\le k\le 4$) should appear as some $m_{5i}$.  Also we have 
\[(**) \qquad \sum_{1=1}^4 m_{i5} f_i =0.\]
If, say, $\supp(q_1)$ has $3$ variables, then we will have $m_{15}=0$, since then 
we will have $\deg f_1+ \deg f_5\ge_H N$.  Then we must have $m_{51}=0$, 
contradicting  observation above. 
Thus we know that every $q_i$ contains exactly $2$ variables. \par

From (**) we know that if $m_{15} = x_j^p$, then $q_1$ is of the form $q_1= x_j^s q_1'$, 
where we must have $\al_j = p+s$. 
Thus changing the order of variables, if necessary,  we may assume 
$f_1= x_1^{\al_1} - x_3^{\al_{13}}x_4^{\al_{14}}$ and $m_{51}= x_4^p$.
Then we have $p+ \al_{14} = \al_4$  since $m_{54}x_4^{\al_4}$ must cancel with $x_4^px_3^{\al_{13}}x_4^{\al_{14}}$ and  we must have $m_{54} = x_3^{\al_{13}}$. 
Then $ x_3^{\al_{13}}q_4$ must cancel with  $m_{53}x_3^{\al_3}$. Thus we have
\[q_1=x_3^{\al_{13}}x_4^{\al_{14}}, q_2= x_a^{\al_{21}}x_4^{\al_{24}}, q_3=x_1^{\al_{31}}x_2^{\al_{32}},
q_4= x_2^{\al_{42}}x_3^{\al_{43}}\]
and $m_{51}= x_4^{\al_{24}}, m_{52}= x_1^{\al_{31}}, m_{53}= x_2^{\al_{42}}, m_{54}=x_3^{\al_{13}}$
with $\al_1=\al_{24}+\al_{21}, \al_2=\al_{32}+\al_{42}, \al_3= \al_{13}+\al_{43}, \al_4= \al_{24}+\al_{14}$.
We notice that $m_{13}=m_{24}=0$ since $\deg f_1+\deg f_3, \deg f_2 + \deg f_4\ge_H N$ and 
we can fill in the other parts of $M$ by $m_{i5} = m_{5i}$ and $\sum m_{ij} f_j =0$.
\end{proof}

\begin{acknowledgement} The author thanks Kazufumi Eto for bringing him to this subject 
by his inspiring talk at \lq\lq Singularity Seminar" at Nihon University.  The main technique 
of this paper came out in the collaboration with J\" urgen Herzog and the author is grateful 
to him for the collaboration.
\end{acknowledgement}


\begin{thebibliography}{99}

\bibitem[BE]{BE} D. A. Buchsbaum and D. Eisenbud, {\it Algebra structures for finite free resolutions, and some structure theorems for ideals of codimension $3$}, Amer. J. Math. {\bf 99} (1977), 447--485.

\bibitem[BFS]{BFS}  V. Barucci, R. Fr\" oberg, M. \c Sahin, On free resolutions of some semigroup rings,
J. Pure Appl. Alg., {\bf 218} (2014), 1107-1116.

\bibitem[Br]{Br} H. Bresinsky, Symmetric semigroups of integers 
generated by 4 elements, Manuscripta Math. {\bf 17} (1975), 205-219.

\bibitem[Ku]{Ku} E. Kunz, \textit{The value-semigroup of a one-dimensional Gorenstein rings},
Proc. Amer. Math. Soc. \textbf{25} (1970), 748-751.

\bibitem[WJ]{WJ} J. Watanabe, {\it A note on Gorenstein rings of embedding codimension three}, Nagoya Math. J. {\bf 50} (1973), 227--232.

\end{thebibliography}
\end{document}